\documentclass{amsart}
\usepackage{comment}
%
%
%
%
\usepackage{amsmath, amssymb, amsgen, amsthm, amscd, xspace}

\newcommand{\R}{{\mathbb{R}}}

\newcommand{\g}{{\mathfrak g}}

\newcommand{\fa}{{\mathfrak a}}
\newcommand{\fb}{{\mathfrak b}}

\newcommand{\ff}{{\mathfrak f}}

\newcommand{\fh}{{\mathfrak h}}
\newcommand{\fk}{{\mathfrak k}}

\newcommand{\ft}{{\mathfrak t}}

\newcommand{\fsl}{{\mathfrak s}{\mathfrak l}}

\newcommand{\ad}{{\mathrm a}{\mathrm d\,}}

\newcommand{\Id}{{\mathrm I}{\mathrm d\,}}

\newcommand{\Int}{{\mathrm I}{\mathrm n}{\mathrm t\,}}
\newcommand{\Is}{{\mathit I}}
\newcommand{\Ker}{{\mathrm K}{\mathrm e}{\mathrm r\,}}

 \makeatother

\newtheorem{thm}{Theorem}[section]

\newtheorem{prop}[thm]{Proposition}

\newtheorem{lem}[thm]{Lemma}

\newtheorem{ex}[thm]{Example}

\begin{document}

\title[On isometry groups of pseudo-Riemannian compact Lie groups]{On isometry groups of pseudo-Riemannian compact Lie groups}
\author{Fuhai Zhu}
\address{Department of Mathematics, Nanjing University, Nanjing 210023, P.R. China}
\email{zhufuhai@nju.edu.cn}

\author{Zhiqi Chen}
\address{Corresponding author. School of Mathematical Sciences and LPMC, Nankai University, Tianjin 300071, China}
\email{chenzhiqi@nankai.edu.cn}

\author{Ke Liang}
\address{School of Mathematical Sciences and LPMC, Nankai University, Tianjin 300071, China}
\email{liangke@nankai.edu.cn}

%

\begin{abstract}
Let $G$ be a connected, simply-connected, compact simple Lie group. In this paper, we show that the isometry group of $G$ with a left-invariant pseudo-Riemannan metric is compact. Furthermore, the identity component of the isometry group is compact if $G$ is not simply-connected.
\end{abstract}

\subjclass[2010]{53C50, 53C30}
\keywords{isometry group, pseudo-Riemannian metric, compact Lie group, identity component.}

\maketitle

\section{Introduction}
The study on isometry groups plays a fundamental role in the study of geometry, and there are beautiful results on isometry groups of Riemannian manifolds and Lorentzian manifolds, see \cite{DAm1988,Gor1980,GW1985,OT1976} and the references therein.

But for a general pseudo-Riemannian manifold $M$, we know much less than that for Riemannian and Lorentzian cases. Let $\Is(M)$ be the isometry group of $M$ and $\Is_o(M)$ the identity component of $\Is(M)$. An important result is given by Gromov in \cite{Gro1987} that $\Is(M)$ contains a closed connected normal abelian subgroup $A$ such that $\Is(M)/A$ is compact if $M$ is compact, simply-connected and real analytic. Moreover, D'Ambra (\cite{DAm1988}) proves that the isometry group $\Is(M)$ of a compact, real analytic, simply connected Lorentzian manifold is compact, and gives an example of a simply connected compact pseudo-Riemannian manifold of type $(7,2)$ with a noncompact isometry group.

In this paper, we focus on group manifolds, and prove the compactness of the isometry group of a connected, compact simple Lie group with a left-invariant pseudo-Riemannian metric.

The paper is organized as follows. In section 2, we will give some notations and list some important results on isometry groups which are useful tools to prove the following theorems in section 3.
\begin{thm}\label{Compact}
If $G$ be a connected, simply-connected, compact simple Lie group with a left-invariant pseudo-Riemannian metric, then the isometry group $\Is(G)$ of $G$ is compact.
\end{thm}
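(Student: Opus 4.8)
The plan is to prove compactness of $\Is(G)$ by analyzing the structure of the isometry group of a left-invariant pseudo-Riemannian metric on a compact simple Lie group. The strategy would split into two parts: first, reduce the problem to understanding the identity component $\Is_o(G)$; second, control the full group via its finitely many components.

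\textbf{Reducing to the identity component.} First I would observe that left translations $L_g$ act as isometries, so $G$ embeds in $\Is(G)$, and the isometry group acts transitively on $G$. The isotropy subgroup $H$ at the identity $e\in G$ consists of isometries fixing $e$, so $\Is(G)/H\cong G$ is compact. Thus it suffices to prove that the isotropy subgroup $H$ is compact. Since $\Is(G)$ is a Lie group acting properly (isometry groups of pseudo-Riemannian manifolds are Lie groups by the Myers--Steenrod-type theorem, which I would invoke from the preliminaries in Section~2), compactness of $\Is(G)$ is equivalent to compactness of $H$. An isometry fixing $e$ is determined by its differential at $e$, so $H$ embeds faithfully into $\mathrm{O}(p,q)$, the pseudo-orthogonal group preserving the metric $\langle\cdot,\cdot\rangle$ on $T_eG=\g$. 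The key point is to show this image lands in a \emph{compact} subgroup.

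\textbf{The differential at the identity and the bi-invariant form.} The heart of the argument is that a left-invariant pseudo-Riemannian metric on $G$, while not bi-invariant in general, can be compared with the Killing form $B$ of $\g$. Since $\g$ is compact simple, $-B$ is a positive-definite inner product, and the metric $\langle\cdot,\cdot\rangle$ is given by $\langle X,Y\rangle = -B(\phi X, Y)$ for some $B$-self-adjoint invertible operator $\phi$ on $\g$. I would show that any isometry $f\in H$ induces, via its differential $df_e=:A\in\mathrm{O}(p,q)$, an automorphism-like constraint on $\g$: the geometry of $G$ (geodesics, curvature, or the Levi-Civita connection) is built canonically from the Lie bracket and $\phi$, so $A$ must preserve these structures. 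In particular I expect to show $A$ normalizes the bracket in a sense forcing $A$ to preserve $-B$ up to the action of $\Aut(\g)$, whence $A$ lies in a compact group. Concretely, the curvature tensor or the set of geodesic symmetries at $e$ is invariant under $H$, and these tensors are definite enough (being governed by $B$) to bound $A$ inside $\mathrm{O}(\g,-B)=\mathrm{O}(n)$, a compact group.

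\textbf{The main obstacle.} The hard part will be controlling how much the differential $A=df_e$ can deviate from an automorphism of $\g$. For a bi-invariant metric the isotropy would simply be $\Aut(\g)$, which is compact; but for a general left-invariant metric the Levi-Civita connection and curvature mix $\phi$ with the bracket in a complicated way, and one must extract a genuinely definite invariant that pins $A$ down. I anticipate the crux is to identify a canonical positive-definite tensor (likely manufactured from the Ricci or curvature operator together with $B$) that every element of $H$ preserves, thereby confining $A$ to a compact orthogonal group and forcing $H$, hence $\Is(G)$, to be compact. Establishing the simply-connectedness hypothesis is used to rule out extra components and to guarantee that an isometry fixing $e$ with trivial differential is the identity.
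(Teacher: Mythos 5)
Your reduction of the theorem to compactness of the isotropy subgroup $H$ at $e$ is sound (the bundle $\Is(G)\to\Is(G)/H\cong G$ has compact base, so compact fiber suffices), and the faithful embedding $H\hookrightarrow \mathrm{O}(p,q)$ via $f\mapsto df_e$ is standard. But the crux of your plan --- manufacturing an $H$-invariant \emph{positive-definite} tensor out of the curvature, Ricci operator, or the operator $\phi$ relating $\langle\cdot,\cdot\rangle$ to the Killing form, so as to confine $df_e(H)$ inside a compact orthogonal group --- is only anticipated, never constructed, and this is precisely where the whole difficulty of the theorem lives. For an indefinite metric, every tensor built naturally from $\langle\cdot,\cdot\rangle$ and the bracket (curvature, Ricci, Levi-Civita connection) is a priori governed by the indefinite form, not by $-B$, and a closed subgroup of $\mathrm{O}(p,q)$ that preserves no definite form can perfectly well be noncompact. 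Worse, D'Ambra's example quoted in Section~2 --- a compact, simply connected pseudo-Riemannian manifold of type $(7,2)$ with noncompact isometry group --- shows that no argument of the purely pointwise, curvature-theoretic kind you sketch can work for pseudo-Riemannian metrics in general: any successful proof must exploit the group structure of $G$ through global structure theory, not just through the algebraic shape of the metric at $e$. So the proposal names the right target but contains no proof of the decisive step.

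The paper's route is genuinely different and is worth internalizing. It never analyzes the isotropy representation directly. Instead it invokes Gromov's structure theorem (Proposition~\ref{compact-quotient}): $\Is(G)$ has finitely many components and $\Is_o(G)$ contains a closed connected normal abelian subgroup $A$ with compact quotient, so by Hochschild's theorem $\Is_o(G)=KA$ with $K$ a maximal compact subgroup containing $L(G)$ and $A$ chosen simply connected. The problem becomes: show $A$ is trivial. For this the paper first pins down $K$: by the Ochiai--Takahashi normality theorem (Theorem~\ref{Normal}) $L(G)$ is normal in $K$, and since the centralizer of $L(G)$ in $\Is(G)$ consists of right translations (Lemma~\ref{lem-rm}), one gets $K\subset L(G)R(G)$. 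Then D'Ambra's orbit lemma (Lemma~\ref{torus-orbit}) forces the orbit $A\cdot e$ into a torus of right translations, and a Lie-algebra computation with the $\fh$-invariant extension of the metric form shows $\fa\subseteq\fh$; since the isotropy algebra $\fh$ contains no nonzero ideal of the full algebra, $\fa=0$. Compactness of $\Is(G)$ then follows from compactness of $\Is_o(G)=K$ together with finiteness of the component group. If you wish to salvage your approach, you would need a substitute for this global input; within your framework alone the key compactness assertion remains unproven.
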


Furthermore, assume that $G$ is a connected compact simple Lie group which is not necessarily simply-connected. Let $\widetilde{G}$ be the universal cover of $G$ with the covering map $\pi:\widetilde{G}\rightarrow G$. Then we have
\begin{thm}\label{nsc}
For any left-invariant pseudo-Riemannian metric on $G$, $\Is_o(G)$ is compact and $\Is_o(G)=\Is_o(\widetilde{G})/\Gamma$, where $\Gamma=\{L_a|a\in\Ker\pi\}$.
\end{thm}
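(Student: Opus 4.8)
The plan is to transfer everything to the universal cover $\widetilde{G}$, where Theorem \ref{Compact} already gives compactness, and then descend. First I would pull back the given left-invariant metric along $\pi$: since $\pi$ is a homomorphism, the relation $\pi\circ L_y=L_{\pi(y)}\circ\pi$ shows that $\pi^*$ of a left-invariant metric on $G$ is a left-invariant (pseudo-Riemannian) metric on $\widetilde{G}$ for which $\pi$ is a local isometry. As $G$ is compact simple, $\Ker\pi$ is a finite subgroup of the center $Z(\widetilde{G})$, and the deck transformation group of $\pi$ is exactly $\Gamma=\{L_a\mid a\in\Ker\pi\}$, which consists of isometries because the metric is left-invariant. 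By Theorem \ref{Compact}, $\Is(\widetilde{G})$ is compact, hence so is $\Is_o(\widetilde{G})$; and since $a\mapsto L_a$ is a continuous map from the connected group $\widetilde{G}$ into $\Is(\widetilde{G})$, its image, in particular $\Gamma$, lies in $\Is_o(\widetilde{G})$.

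The crux of the argument is to show that $\Gamma$ is \emph{central} in $\Is_o(\widetilde{G})$. Because $L(\widetilde{G})\subseteq\Is_o(\widetilde{G})$ acts transitively on $\widetilde{G}$, the orbit--stabilizer relation gives $\Is_o(\widetilde{G})=L(\widetilde{G})\cdot H$, where $H$ is the isotropy subgroup at the identity $e$. By the structural results on isometry groups of left-invariant metrics recalled in Section 2, and since the Lie algebra $\g$ of $\widetilde{G}$ is simple and hence centerless, every isometry fixing $e$ is a Lie group automorphism of $\widetilde{G}$, so $H\subseteq\Aut(\widetilde{G})$. For $g\in\Is_o(\widetilde{G})$ I would write $g=L_{g(e)}\phi_g$ with $\phi_g=L_{g(e)}^{-1}g\in H$ depending continuously on $g$. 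For $a\in\Ker\pi$ one computes $\phi_g L_a\phi_g^{-1}=L_{\phi_g(a)}$ with $\phi_g(a)\in Z(\widetilde{G})$, and since $a$ is central, $L_{g(e)}$ commutes with $L_{\phi_g(a)}$; thus $gL_ag^{-1}=L_{\phi_g(a)}$. Now $g\mapsto\phi_g(a)$ is a continuous map from the connected group $\Is_o(\widetilde{G})$ into the finite set $Z(\widetilde{G})$ and equals $a$ at $g=e$, so it is constantly $a$. Hence $gL_ag^{-1}=L_a$ for all $g$, i.e. $\Gamma$ is central. This is the step I expect to be the main obstacle, as it is where the specific structure of the isotropy representation (and the finiteness and connectivity considerations) must be used in the pseudo-Riemannian setting.

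Granting centrality, $\Gamma$ is normal in $\Is_o(\widetilde{G})$ and normalizes itself as a group of deck transformations, so each $g\in\Is_o(\widetilde{G})$ descends to a map $\bar g$ of $G=\widetilde{G}/\Gamma$ with $\bar g\circ\pi=\pi\circ g$; since $\pi$ is a local isometry, $\bar g$ is an isometry of $G$. This defines a continuous homomorphism $q\colon\Is_o(\widetilde{G})\to\Is(G)$ whose image is connected and contains the identity, hence lies in $\Is_o(G)$. Its kernel is the set of $g$ with $\pi\circ g=\pi$, namely the deck group $\Gamma$, so $q$ induces an injective homomorphism $\Is_o(\widetilde{G})/\Gamma\hookrightarrow\Is_o(G)$.

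It remains to see that $q$ is surjective onto $\Is_o(G)$. I would identify the Lie algebra of $\Is_o(M)$ with the complete Killing fields of $M$. Since $\Gamma$ is central in $\Is_o(\widetilde{G})$, it acts trivially on this Lie algebra, so every complete Killing field of $\widetilde{G}$ is $\Gamma$-invariant and therefore descends to a Killing field of $G$; conversely every Killing field of $G$ pulls back to one on $\widetilde{G}$. Thus $q$ has bijective differential at the identity and is a local diffeomorphism, so its image is open in $\Is_o(G)$; being also compact it is closed, and $\Is_o(G)$ is connected, whence the image is all of $\Is_o(G)$. Therefore $\Is_o(G)=\Is_o(\widetilde{G})/\Gamma$, and as $\Is_o(\widetilde{G})$ is compact and $\Gamma$ is finite, $\Is_o(G)$ is compact, completing the proof.
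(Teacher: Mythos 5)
Your overall architecture---descending from $\widetilde{G}$ rather than lifting from $G$---is viable, but the step you yourself flag as the crux, the centrality of $\Gamma$ in $\Is_o(\widetilde{G})$, is not justified as written. You assert that ``by the structural results recalled in Section 2'' and centerlessness of $\g$, every isometry of $\widetilde{G}$ fixing $e$ is a Lie group automorphism. No result in Section 2 gives this: the Ochiai--Takahashi theorem quoted there is proved only for \emph{Riemannian} left-invariant metrics, and Gromov's and D'Ambra's results say nothing about isotropies being automorphisms. Worse, the claim in the generality you state it is false: for a bi-invariant metric the inversion $x\mapsto x^{-1}$ is an isometry fixing $e$, and it is an anti-automorphism. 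What you actually need (and what is true) is that isometries \emph{in the identity component} fixing $e$ are automorphisms. This follows from $\Is_o(\widetilde{G})\subseteq L(\widetilde{G})R(\widetilde{G})$, which is a by-product of the paper's \emph{proof} of Theorem~\ref{Compact} (its statement gives only compactness); alternatively, you can derive it from compactness alone, as follows. Let $H$ be the isotropy group of $\Is_o(\widetilde{G})$ at $e$; it is connected because $\widetilde{G}$ is simply connected (homotopy sequence of the fibration $H\to\Is_o(\widetilde{G})\to\widetilde{G}$), we have $\Is_o(\widetilde{G})=L(\widetilde{G})H$ with trivial intersection, and $H$ contains no nontrivial normal subgroup of $\Is_o(\widetilde{G})$ by effectiveness of the action. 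Since $\Is_o(\widetilde{G})$ is compact by Theorem~\ref{Compact}, Theorem~\ref{Normal} applies and gives $L(\widetilde{G})\lhd\Is_o(\widetilde{G})$; then for $\phi\in H$ one has $\phi L_g\phi^{-1}=L_{\sigma(g)}$ for an automorphism $\sigma$ of $\widetilde{G}$, and evaluation at $e$ yields $\phi=\sigma\in\Aut(\widetilde{G})$. With this lemma inserted, your continuity-plus-finiteness argument for $gL_ag^{-1}=L_a$ is correct, and everything downstream (the descent homomorphism $q$, the identification of $\Ker q$ with the deck group $\Gamma$, and surjectivity via lifting Killing fields together with the open--closed argument) goes through.

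Once repaired, your route is genuinely different from the paper's, which argues in the opposite direction: each $\varphi\in\Is_o(G)$ is lifted to $|\Ker\pi|$ isometries of $\widetilde{G}$ lying in $\Is_o(\widetilde{G})$; the set of all lifts is the closed subgroup $G_1\subseteq\Is_o(\widetilde{G})$ preserving the $\Ker\pi$-cosets, so compactness of $\Is_o(G)$ is immediate; and the identification $\Is_o(G)=\Is_o(\widetilde{G})/\Gamma$ is obtained by transporting $L(\widetilde{G})\lhd G_1$ to $L(G)\lhd\Is_o(G)$, concluding $\Is_o(G)\subseteq L(G)R(G)$, and matching the two isotropy groups inside $\Int(\g)$ as the automorphisms preserving $\langle\cdot,\cdot\rangle$. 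Your version buys a cleaner group-theoretic statement---$\Gamma$ is central, so $\Is_o(G)$ is an honest central quotient---and replaces the paper's rather terse identification $H=\widetilde{H}$ by a transparent Killing-field argument; the price is exactly the centrality lemma above, which is where the pseudo-Riemannian structure theory must be invoked.
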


\section{Results on isometry groups}

Let $G$ be a connected Lie group. For any $y\in G$, the left (resp. right) translation $x\mapsto yx$ (resp. $x\mapsto xy^{-1}$) of $G$ onto itself is denoted by  $L_y$ (resp. $R_y$). The set $L(G)$ (resp. $R(G)$) of all left (resp. right) translations of $G$ is a  Lie transformation group of $G$ and the  map: $y\mapsto L_y$ (resp. $y\mapsto R_y$) gives an isomorphism of $G$ onto $L(G)$ (resp. $R(G)$).

If $G$ is endowed with a left-invariant Riemannian metric, then the group $\Is(G)$ of all isometries contains $L(G)$. If $\Is(G)$ contains  $R(G)$, the metric is bi-invariant. It is well-known that if $G$ is connected, compact and semisimple with a bi-invariant metric, then the identity component $\Is_o(G)$ of $\Is(G)$ is  $L(G)R(G)$. In general, if the Riemannian metric is left-invariant but not bi-invariant, Ochiai and Takahashi proved the following theorem for simple Lie groups.

\begin{thm}[{\cite[Theorem~1]{OT1976}}] Let $G$ be a connected, compact simple Lie group with a left-invariant Riemannian metric. Then $\Is_o(G)$ is contained in $L(G)R(G)$, that is, for each isometry $f$ in $\Is_o(G)$ there exist $x, y$ in $G$ such that $f=L_x\circ R_y$.
\end{thm}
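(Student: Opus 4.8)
The plan is to descend to the isotropy subgroup of $\Is_o(G)$ at the identity $e$ and to show that it consists precisely of the inner conjugations $I_z\colon g\mapsto zgz^{-1}=L_z(R_z(g))$. Since the metric is left-invariant, $L(G)\subseteq\Is(G)$, and $L(G)\cong G$ is connected, so $L(G)\subseteq\Is_o(G)$ and $\Is_o(G)$ acts transitively on $G$. Hence for $f\in\Is_o(G)$ the map $h:=L_{f(e)}^{-1}\circ f$ lies in the isotropy group $H:=\{g\in\Is_o(G):g(e)=e\}$ and $f=L_{f(e)}\circ h$. If every $h\in H$ equals some $I_z$, then $f=L_{f(e)}L_zR_z=L_{f(e)z}R_z\in L(G)R(G)$, which is the theorem. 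Moreover, choosing a path $f_t$ from $\mathrm{id}$ to $f$ in $\Is_o(G)$, the curve $h_t:=L_{f_t(e)}^{-1}\circ f_t$ lies in $H$ (each $h_t$ fixes $e$) and joins $\mathrm{id}$ to $h$, so $h$ lies in the identity component $H_o$ of $H$. Restricting to $H_o$ is essential: a metric-preserving \emph{outer} automorphism $\sigma$ is an isometry fixing $e$ with $\sigma\ne I_z$ for all $z$, hence $\sigma\notin L(G)R(G)$, which is exactly why the conclusion is stated for the identity component.

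The map $\rho\colon H\to O(\mathfrak g,\langle\cdot,\cdot\rangle_e)$, $\rho(h)=dh_e$, is an injective homomorphism, because an isometry of the connected and complete manifold $G$ fixing $e$ is determined by its differential there. Thus everything reduces to the claim, which is the heart of the matter, that $A_h:=dh_e$ is a Lie-algebra automorphism of $\mathfrak g$ for every $h\in H$. Granting this, for $h\in H_o$ the path $h_t$ shows that $A_h$ lies in the identity component of $\mathrm{Aut}(\mathfrak g)$, which for compact simple $\mathfrak g$ equals $\mathrm{Inn}(\mathfrak g)=\mathrm{Ad}(G)$; write $A_h=\mathrm{Ad}(z)$. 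Because an automorphism $\Psi$ of $G$ sends $X^L$ to $(d\Psi_e X)^L$, it preserves the left-invariant metric exactly when $d\Psi_e$ is orthogonal on $(\mathfrak g,\langle\cdot,\cdot\rangle_e)$; applying this to $\Psi=I_z$, whose differential $\mathrm{Ad}(z)=A_h$ is orthogonal, shows $I_z\in\Is(G)$. Since $I_z$ fixes $e$ and $d(I_z)_e=A_h=dh_e$, injectivity of $\rho$ gives $h=I_z$, completing the reduction.

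It remains to prove the claim that $A_h\in\mathrm{Aut}(\mathfrak g)$, and I expect this to be the main obstacle and the place where compactness and simplicity of $G$ are indispensable. Since $h$ is an isometry fixing $e$, $A_h$ preserves the curvature tensor $R$, the Ricci operator, and all covariant derivatives $\nabla^kR$ at $e$; because the left-invariant metric is real-analytic, this is precisely the condition that $A_h$ be the $1$-jet of an isometry, and the task is to upgrade invariance of this curvature package to invariance of the bracket. Encode the metric by the positive self-adjoint operator $\phi$ defined relative to the Killing form $B$ by $\langle X,Y\rangle_e=-B(\phi X,Y)$; then the Levi-Civita connection at $e$ is $\alpha(X,Y)=\tfrac12\big([X,Y]-\mathrm{ad}_X^{*}Y-\mathrm{ad}_Y^{*}X\big)$, where $\mathrm{ad}_X^{*}$ is the $\langle\cdot,\cdot\rangle_e$-adjoint of $\mathrm{ad}_X$, so that both $\alpha$ and $R$ are universal expressions in $\phi$ and the bracket. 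The strategy is to deduce from the $A_h$-invariance of $R$ (and of the Ricci operator and $\nabla R$ if needed) first that $A_h$ commutes with $\phi$, and then that $A_h\,\mathrm{ad}(X)\,A_h^{-1}=\mathrm{ad}(A_hX)$ for all $X\in\mathfrak g$, i.e.\ that $A_h$ normalizes $\mathrm{ad}(\mathfrak g)$ and preserves the bracket. Simplicity of $\mathfrak g$ is what forbids the curvature tensor from having orthogonal symmetries beyond those normalizing $\mathrm{ad}(\mathfrak g)$, and compactness makes $B$ definite so that $\phi$ is a genuine comparison of the two metrics; this last extraction is exactly the step that fails for general Lie groups and is where the real work is concentrated.
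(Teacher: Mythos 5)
Your reduction (pass to the isotropy group $H$ at $e$, use injectivity of $h\mapsto dh_e$, and use connectedness to land in $\mathrm{Inn}(\mathfrak g)=\Ad(G)$) is sound and is the standard frame. But the heart of the theorem --- the claim that $A_h=dh_e$ is an automorphism of $\mathfrak g$ for every $h$ in (the identity component of) $H$ --- is exactly what you do not prove: you state a strategy and acknowledge it as ``the main obstacle.'' Worse, the strategy as described cannot succeed in the form you propose. Invariance of the full curvature package $(\phi, R,\nabla R,\nabla^2R,\dots)$ at $e$ does not force bracket preservation: the curvature of a left-invariant metric is an \emph{even} expression in the bracket, so it is also invariant under anti-automorphisms. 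Concretely, for a bi-invariant metric one has $R(X,Y)Z=-\tfrac14[[X,Y],Z]$ and $\nabla R=0$; the map $A=-\mathrm{Id}$, which is $d\iota_e$ for the inversion isometry $\iota(g)=g^{-1}$ fixing $e$, is orthogonal, commutes with $\phi=\mathrm{Id}$, and preserves $R$ and all $\nabla^kR$, yet $A\notin\Aut(\mathfrak g)$. So no pointwise jet-level argument of the kind you sketch can yield $A_h\in\Aut(\mathfrak g)$; the restriction to the identity component must enter \emph{before} this step (it is what excludes $\iota$), whereas in your outline connectedness is invoked only afterwards, to upgrade $\Aut$ to $\mathrm{Inn}$. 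This is a genuine gap, not a routine detail.

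For comparison, the proof this paper points to (Ochiai--Takahashi) avoids curvature entirely and works globally with group theory: since $G$ is compact, $\Is(G)$ is a compact Lie group (Myers--Steenrod), and one applies the quoted Theorem~\ref{Normal} with $K=\Is_o(G)$, the simple subgroup $L(G)$, and $H$ the isotropy group at $e$ (which contains no nontrivial normal subgroup of $K$ because $K$ acts transitively and effectively). The conclusion is that $L(G)$ is \emph{normal} in $\Is_o(G)$; then conjugation by any isotropy element $h$ permutes the left translations, $hL_gh^{-1}=L_{\sigma(g)}$, and evaluating at $e$ shows $h=\sigma$ is a continuous automorphism of $G$ (compare the paper's Lemma~\ref{lem-rm}, which handles the commuting case). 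Connectedness then places $dh_e$ in $\Aut_o(\mathfrak g)=\mathrm{Inn}(\mathfrak g)$, so $h=I_z$ and $f=L_{f(e)z}\circ R_z$. In other words, the step you isolate as the crux is obtained there as a corollary of a compact-group structure theorem, which is precisely the ingredient your proposal is missing.
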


The key ingredient of the proof is the following theorem, which we quoted here for the readers' convenience since it will be used frequently in this paper.
\begin{thm}[{\cite[Theorem~4]{OT1976}}]\label{Normal}
Let $K$ be a connected, compact Lie group. Let $G$ and $H$ be closed, connected subgroups of $K$ such that
\begin{enumerate}
\item $G$ is simple.
\item $K=GH$ and $G\cap H = \{e\}$.
\item $H$ contains no normal subgroups of $K$ except $\{e\}$.
\end{enumerate}
Then $G$ is a normal subgroup of $K$.
\end{thm}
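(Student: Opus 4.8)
The plan is to prove the infinitesimal statement: writing $\mathfrak{k},\mathfrak{g},\mathfrak{h}$ for the Lie algebras of $K,G,H$, it suffices to show that $\mathfrak{g}$ is an ideal of $\mathfrak{k}$, for then the connected subgroup it generates is normal in the compact connected group $K$. From $K=GH$ and $G\cap H=\{e\}$, comparing dimensions yields the vector-space decomposition $\mathfrak{k}=\mathfrak{g}\oplus\mathfrak{h}$; and hypothesis (3) translates into the statement that $\mathfrak{h}$ contains no nonzero ideal of $\mathfrak{k}$, since any such ideal would integrate to a nontrivial connected normal subgroup of $K$ contained in $H$. Thus the whole problem becomes: given $\mathfrak{k}=\mathfrak{g}\oplus\mathfrak{h}$ with $\mathfrak{g}$ simple and $\mathfrak{h}$ a subalgebra containing no nonzero ideal of $\mathfrak{k}$, show $\mathfrak{g}$ is an ideal.

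Next I would reduce to the ideal generated by $\mathfrak{g}$. Decompose $\mathfrak{k}=\mathfrak{z}\oplus\mathfrak{k}_1\oplus\cdots\oplus\mathfrak{k}_r$ into its center and simple ideals. Since $\mathfrak{g}=[\mathfrak{g},\mathfrak{g}]\subseteq[\mathfrak{k},\mathfrak{k}]=\bigoplus_i\mathfrak{k}_i$, and each projection $p_i|_{\mathfrak{g}}$ has a kernel that is an ideal of the simple algebra $\mathfrak{g}$, every $p_i|_{\mathfrak{g}}$ is either $0$ or injective. Let $\mathfrak{n}=\bigoplus_{i\in I}\mathfrak{k}_i$ collect the factors onto which $\mathfrak{g}$ projects nontrivially; this is exactly the ideal of $\mathfrak{k}$ generated by $\mathfrak{g}$. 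Intersecting $\mathfrak{k}=\mathfrak{g}\oplus\mathfrak{h}$ with $\mathfrak{n}\supseteq\mathfrak{g}$ gives $\mathfrak{n}=\mathfrak{g}\oplus\mathfrak{d}$ with $\mathfrak{d}:=\mathfrak{n}\cap\mathfrak{h}$ a subalgebra, and $\mathfrak{g}$ is an ideal of $\mathfrak{k}$ if and only if $\mathfrak{n}=\mathfrak{g}$, i.e. $\mathfrak{d}=0$. Any nonzero ideal of $\mathfrak{n}$ inside $\mathfrak{d}$ would be a partial sum of the $\mathfrak{k}_i$, hence an ideal of $\mathfrak{k}$ lying in $\mathfrak{h}$, which is excluded; so $\mathfrak{d}$ contains no nonzero ideal of $\mathfrak{n}$. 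Everything is now reduced to a statement about the compact semisimple algebra $\mathfrak{n}$: if $\mathfrak{n}=\mathfrak{g}\oplus\mathfrak{d}$ with $\mathfrak{g}$ simple projecting nontrivially onto every simple factor and $\mathfrak{d}$ a subalgebra containing no nonzero ideal, then $\mathfrak{d}=0$.

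This last assertion is the geometric heart, and it is where compactness and hypothesis (3) do their work. I would attack it by a \emph{Goursat}-type analysis of $\mathfrak{d}\subseteq\mathfrak{n}=\bigoplus_i\mathfrak{k}_i$, using the complementarity $\mathfrak{d}\cap\mathfrak{g}=0$ together with $\dim\mathfrak{d}=\sum_i\dim\mathfrak{k}_i-\dim\mathfrak{g}$, and using an $\mathrm{Ad}$-invariant inner product (available since $K$ is compact) to split $\mathfrak{n}$ into $\mathfrak{g}$-submodules and control the gluing data. The clean model is $r=2$ with $\mathfrak{g}$ the graph $\{(X,\sigma X)\}$ of an isomorphism $\sigma:\mathfrak{k}_1\to\mathfrak{k}_2$: here $\dim\mathfrak{d}=\dim\mathfrak{k}_1$, and because no whole factor $\mathfrak{k}_i$ may lie in $\mathfrak{d}$, the Goursat data are forced to present $\mathfrak{d}$ as the graph of an isomorphism $\tau:\mathfrak{k}_1\to\mathfrak{k}_2$. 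Then $\mathfrak{d}\cap\mathfrak{g}$ is precisely the fixed subalgebra of the automorphism $\sigma^{-1}\tau$ of the compact simple algebra $\mathfrak{k}_1$; since such a fixed subalgebra is always nonzero, this contradicts $\mathfrak{d}\cap\mathfrak{g}=0$, forcing $r=1$ and $\mathfrak{g}=\mathfrak{k}_1$, i.e. $\mathfrak{d}=0$.

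The main obstacle is to push this beyond the model case: in general $\mathfrak{g}$ may embed as a \emph{proper} simple subalgebra projecting onto several factors, and the Goursat data (the subalgebras $\mathfrak{d}\cap\mathfrak{k}_i$, the intermediate quotients, and the gluing isomorphisms) can be genuinely intricate. Controlling them requires combining the no-ideal hypothesis—which forbids any full factor, and more generally any full diagonal sub-sum, inside $\mathfrak{d}$—with careful dimension counting and the rigidity supplied by compactness, namely the invariant inner product and the nonvanishing of automorphism fixed-subalgebras. This bookkeeping is essentially the content of the classification of factorizations of compact semisimple Lie algebras, and I expect it, rather than the reductions, to be the real labor. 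Once $\mathfrak{d}=0$ is established, $\mathfrak{g}=\mathfrak{n}$ is an ideal of $\mathfrak{k}$, and the connected subgroup $G$ it integrates is normal in $K$.
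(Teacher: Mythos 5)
First, a remark on the comparison you asked for: the paper itself does not prove this statement at all --- it is quoted, as a known tool, from Ochiai--Takahashi \cite{OT1976}, so your attempt can only be judged on its own merits. Your two reduction steps are correct and cleanly executed: the passage to the infinitesimal statement (with hypothesis (3) becoming ``$\mathfrak{h}$ contains no nonzero ideal of $\mathfrak{k}$''), and the passage to the ideal $\mathfrak{n}=\bigoplus_{i\in I}\mathfrak{k}_i$ generated by $\mathfrak{g}$, with $\mathfrak{d}=\mathfrak{n}\cap\mathfrak{h}$ a complement containing no nonzero ideal of $\mathfrak{n}$. The problem is that your proof stops exactly where the theorem begins.

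The gap is twofold. (i) Even your model case is not closed: the claim that, because no full factor lies in $\mathfrak{d}$, ``the Goursat data are forced to present $\mathfrak{d}$ as the graph of an isomorphism'' is unjustified. Goursat allows $\mathfrak{d}$ to have proper nonzero edge subalgebras $B_i=\mathfrak{d}\cap\mathfrak{k}_i$ glued along a partial graph; for instance $\mathfrak{d}=B_1\oplus B_2$ is a complement to the graph $\mathfrak{g}=\{(X,\sigma X)\}$ precisely when $\mathfrak{k}_1=B_1\oplus\sigma^{-1}(B_2)$, i.e.\ precisely when the compact \emph{simple} algebra $\mathfrak{k}_1$ admits a vector-space-direct factorization into two proper subalgebras. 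Ruling this out is the simple case of the very statement being proved, and you never establish it; so the model case silently assumes what is to be shown. (ii) For the general case you explicitly defer to ``the classification of factorizations of compact semisimple Lie algebras.'' That classification (due to Onishchik) \emph{is} the nontrivial content of this theorem; announcing it as expected labor, without proving it or invoking a precise citable statement, leaves the core unproven. What is missing is a genuinely global input, which infinitesimal Goursat bookkeeping plus the fixed-point lemma cannot supply. The cheapest such input: since $K=GH$ and $G\cap H=\{e\}$, the multiplication map $G\times H\to K$ is a continuous bijection of compact Hausdorff spaces, hence a homeomorphism; by K\"unneth and Hopf's theorem ($H^*(\cdot\,;{\mathbb{Q}})$ of a compact connected Lie group is an exterior algebra on odd generators, with $b_1$ the rank of the central torus and, for semisimple groups, $b_3$ the number of simple ideals) one gets immediately that if $\mathfrak{k}$ is simple then $b_1(H)=0$ and $1=b_3(K)=1+b_3(H)$ force $H=\{e\}$; applied to $N=GD$ it shows in general that $\mathfrak{d}$ is semisimple with exactly $|I|-1$ simple ideals. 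That argument repairs your model case and is the right starting point for the remaining diagonal analysis, but as submitted, the decisive step of your proof is absent.
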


If the manifold is not a group manifold but simply-connected and analytic, then Gromov gave the following result.
\begin{prop}[{\cite[0.6.B]{Gro1987}}]\label{compact-quotient}
Let $M$ be a simply-connected, compact, real analytic manifold with a pseudo-Riemannian metric. Then
\begin{enumerate}
\item $\Is(M)$ has finitely many connected components.
\item $\Is(M)$ contains a closed connected normal abelian subgroup $A$ such that $\Is(M)/A$ is compact.
\item If $x\in \Is(M)$ satisfying $xv=v$ and $D_x(T_vM)=id$, then $x=id$.
\end{enumerate}
\end{prop}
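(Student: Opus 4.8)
Parts (1) and (2) are the substantive content of Gromov's theorem, and the plan is to obtain them from his theory of automorphism groups of rigid geometric structures rather than to reprove them by hand; part (3), by contrast, is elementary and I would prove it directly. The common starting point is that a pseudo-Riemannian metric $g$ together with its Levi--Civita connection $\nabla$ is a rigid geometric structure of algebraic type, so that, by the pseudo-Riemannian analogue of the Myers--Steenrod theorem, $\Is(M)$ is a Lie group acting on $M$ by real-analytic transformations preserving $\nabla$.

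For (2) the key is that, because $M$ is real-analytic and simply-connected, every locally defined Killing field extends to a global one; hence the Lie algebra of $\Is(M)$ is realized by honest Killing fields and $\Is_o(M)$ is governed by this Killing algebra. One then analyzes the isotropy representation at a generic point together with the orbit structure: Gromov's representation and centralizer theorems constrain the algebraic hull of the isotropy, and compactness of $M$ forces the semisimple and compact directions of $\Is_o(M)$ to have compact closure, so that the only possible noncompactness is carried by a closed connected normal abelian subgroup $A$ with $\Is(M)/A$ compact. Finiteness of the component group in (1) follows once $\Is(M)$ is known to be a Lie group whose structure is controlled by this analytic Killing algebra.

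Part (3) I would prove without analyticity, using only connectedness. Since an isometry $x$ preserves $\nabla$ it is affine, so it commutes with the exponential map: $x\circ\exp_p=\exp_{x(p)}\circ D_px$ wherever both sides are defined. Let $F=\{p\in M:x(p)=p,\ D_px=\mathrm{id}_{T_pM}\}$. Then $F$ is nonempty (it contains $v$), closed (by continuity of $x$ and of its differential), and open: if $p\in F$ and $U=\exp_p(V)$ is a normal neighborhood, then for $w\in V$ we have $x(\exp_p w)=\exp_p(D_px\,w)=\exp_p w$, so $x=\mathrm{id}$ on $U$ and hence $Dx=\mathrm{id}$ on $U$, giving $U\subseteq F$. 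Connectedness of $M$ then yields $F=M$, i.e. $x=\mathrm{id}$.

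The main obstacle is (2): its proof is exactly the hard core of Gromov's rigidity theory and cannot be reached from the Riemannian results of Ochiai--Takahashi quoted above. In a self-contained treatment one would have to develop (or carefully cite) the global extension of local Killing fields and the algebraic-hull analysis of the isotropy representation, which is where all the difficulty lies.
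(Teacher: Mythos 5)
This proposition is quoted in the paper as an external result of Gromov (cited as [0.6.B]) with no proof supplied, so your plan of citing Gromov's rigidity theory for parts (1) and (2) rather than reproving them matches the paper's treatment exactly. Your elementary argument for (3) is correct and self-contained: an isometry preserves the Levi--Civita connection and hence commutes with exponential maps, so the set $F=\{p\in M : x(p)=p,\ D_px=\mathrm{id}\}$ is nonempty, closed, and open, and connectedness of $M$ forces $F=M$ --- and you rightly observe that this needs neither compactness, simple-connectedness, nor analyticity.
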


Furthermore, for Lorentzian manifolds, D'Ambra proved the following theorem.
\begin{thm}[{\cite[Theorem~1.1]{DAm1988}}] If $M$ is a compact, real analytic, simply connected Lorentzian manifold, then the
isometry group $\Is(M)$ is compact.
\end{thm}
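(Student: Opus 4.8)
The plan is to derive this from Gromov's structure theorem, Proposition~\ref{compact-quotient}, by showing that the normal abelian factor it produces is forced to be compact once the signature is Lorentzian.

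The hypotheses on $M$ are exactly those of Proposition~\ref{compact-quotient}, so I may apply it directly: $\Is(M)$ has finitely many connected components and contains a closed, connected, normal abelian subgroup $A$ with $\Is(M)/A$ compact. The quotient map $\Is(M)\to\Is(M)/A$ is a fiber bundle with fiber $A$ over a compact base, so if $A$ is compact then $\Is(M)$ is compact. Writing the connected abelian Lie group $A$ as $\R^p\times T^q$, the whole theorem reduces to proving $p=0$.

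Suppose instead $p\geq1$, and choose in $A$ a one-parameter subgroup $\{\phi_t\}_{t\in\R}$ whose closure in $\Is(M)$ is non-compact, generated by a Killing field $X$. Since $A$ is abelian its conjugation action on $\fa$ is trivial, so $\Is(M)$ acts on $\fa$ through the compact quotient $\Is(M)/A$; averaging produces an $\Is(M)$-invariant inner product on $\fa$, which reflects the non-compactness of $A$ faithfully in the flow of $X$ rather than letting it be conjugated away. For isometries of a compact manifold, convergence in $\Is(M)$ amounts to convergence of frames, so non-precompactness of $\{\phi_t\}$ means that for some sequence $t_n\to\infty$ the differentials $D\phi_{t_n}$, read in a pseudo-orthonormal frame, leave every compact subset of the structure group $O(n-1,1)$.

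Here the Lorentzian hypothesis is decisive: $O(n-1,1)$ has real rank one, so escape to infinity forces $D\phi_{t_n}$ to behave, up to bounded factors, like arbitrarily large boosts, contracting one null line while expanding another. Using compactness of $M$ to extract an accumulation frame, I would convert this contraction--expansion into an $\Is(M)$-invariant causal object---a distinguished null direction, or a point fixed with degenerate differential---and then invoke Proposition~\ref{compact-quotient}(3) together with real-analyticity and simple-connectedness to exclude it, contradicting $p\geq1$. I expect this last step to be the main obstacle: unlike the Riemannian case there is no uniform bound on $g(X,X)$ or on $\|D\phi_t\|$ to lean on, so one must extract genuine hyperbolic dynamics from the rank-one boost behavior and show that they are incompatible with a compact, analytic, simply-connected manifold. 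It is precisely here, rather than in the mere existence of $A$, that the full strength of Gromov's rigidity is needed.
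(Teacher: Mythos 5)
You should first note that the paper does not prove this statement at all: it is quoted from D'Ambra \cite{DAm1988} as a known tool (alongside Lemma~\ref{torus-orbit}), so your argument has to stand entirely on its own. Its first half does: applying Proposition~\ref{compact-quotient} and writing $A\cong\R^p\times T^q$, the theorem is indeed equivalent to $p=0$, and this reduction is also how D'Ambra begins. The averaging construction of an invariant inner product on $\fa$ is correct but does no work in what follows, so it can be dropped.

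The genuine gap is the final step, which you yourself flag as ``the main obstacle'': converting the escape of $D\phi_{t_n}$ to infinity in $O(n-1,1)$ into an $\Is(M)$-invariant causal object and then excluding it. This is not a verification left to the reader; it is the entire content of D'Ambra's theorem. Concretely, two things block the plan as stated. First, a sequence $\phi_{t_n}$ with no convergent subsequence in $\Is(M)$ produces no limiting isometry, so Proposition~\ref{compact-quotient}(3) has nothing to be applied to; to extract a ``distinguished null direction'' one must compactify the dynamics (act on jets or on the projectivized null-cone bundle), control the north--south behavior of large boosts there, and show the limit object is genuinely invariant and analytic --- none of which is routine. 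Second, the contradiction at the end cannot follow from soft general principles: D'Ambra's own example of a simply connected compact pseudo-Riemannian manifold of type $(7,2)$ with noncompact isometry group shows that ``noncompactness yields an invariant degenerate structure, contradiction'' fails in signature only slightly beyond Lorentzian, and the $SL(2,\R)/\Gamma$ example shows simple connectedness must also enter essentially. D'Ambra's actual proof threads these needles with a detailed study of isotropic Killing fields on compact Lorentzian manifolds, Lemma~\ref{torus-orbit}, and Gromov's stratification theory; your proposal correctly identifies where Lorentzian geometry must enter, but stops exactly where the proof begins.
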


The following Lemma is useful in the paper of D'Ambra.
\begin{lem}[{\cite[Lemma~3.1]{DAm1988}}]\label{torus-orbit}
Let $M$ be a compact, simply connected pseudo-riemannian real analytic
manifold. Then the orbit of any maximal connected abelian subgroup $A\subset \Is(M)$ in $M$ equals to that of the maximal torus $T\subset A$.
\end{lem}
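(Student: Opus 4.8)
The plan is to reduce the equality of orbits to a statement about one-parameter subgroups, and then to control the group elements along recurrent orbits. First I would record the structure of $A$: since $A$ is a maximal connected abelian subgroup of $\Is(M)$, its closure $\overline{A}$ is again connected and abelian, so $\overline{A}=A$ and $A$ is a closed, connected abelian Lie group. Hence $A\cong V\times T$, where $T$ is its unique maximal torus and $V\cong\R^k$ is a closed vector subgroup, with $V$ and $T$ commuting. Because $T\cdot v\subseteq A\cdot v$ always, and because $s\cdot(t\cdot v)=t\cdot(s\cdot v)$ for $s\in V$, $t\in T$, the desired equality $A\cdot v=T\cdot v$ is equivalent to the inclusion $V\cdot v\subseteq T\cdot v$ for every $v\in M$. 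Moreover $S_v:=\{s\in V: s\cdot v\in T\cdot v\}$ is a closed subgroup of $V$: it is the preimage of the compact set $T\cdot v$ under the continuous orbit map $V\to M$, and it is a subgroup by the same commutation identity. So it suffices to prove $S_v=V$, i.e. that no one-parameter subgroup of $V$ leaves the torus orbit of $v$.

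Next I would bring in the compactness of $M$. A pseudo-Riemannian metric on the orientable (indeed simply-connected) manifold $M$ determines an isometry-invariant volume form, so every one-parameter subgroup $\phi_t=\exp(t\xi)\subseteq A$ acts by volume-preserving analytic diffeomorphisms of the compact manifold $M$. By the Poincar\'e recurrence theorem the set of $\phi_t$-recurrent points has full volume, hence is dense, and for such a point $v$ there are $t_n\to+\infty$ with $\phi_{t_n}\cdot v\to v$. The aim is to use this recurrence to force the generator $\xi$ into $\mathrm{Lie}(T)$. The mechanism is the abelian structure: writing $\xi=(\xi_V,\xi_T)$ under $A\cong V\times T$, the $V$-component of $\phi_{t_n}$ is $t_n\xi_V$, so if the group elements $\phi_{t_n}$ \emph{converged} in $A$ along some $t_n\to+\infty$, then $t_n\xi_V$ would converge in $\R^k$, which with $t_n\to\infty$ forces $\xi_V=0$ and hence $\phi_t\subseteq T$; the corresponding orbit is then automatically contained in $T\cdot v$.

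The crux, therefore, is to promote the recurrence of the \emph{orbit points} $\phi_{t_n}\cdot v\to v$ to convergence of the \emph{group elements} $\phi_{t_n}$ inside $\Is(M)$. Here the rigidity supplied by Proposition~\ref{compact-quotient}(3) enters: an analytic isometry is determined by its $1$-jet $(f(v),D_vf)$, so $\phi_{t_n}$ converges as soon as the linear parts $D_v\phi_{t_n}$ stay in a compact subset of the pseudo-orthogonal group of $(T_vM,g_v)$. Thus I would argue that along a recurrent orbit the derivative cocycle $D_v\phi_{t_n}$ cannot escape to infinity; passing to a convergent subsequence then yields a limit $\psi=\lim\phi_{t_n}\in\overline{A}=A$ with $\psi\cdot v=v$, and the displacement estimate of the previous paragraph forces $\xi_V=0$. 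Since $\{v: s\cdot v\in T\cdot v\}$ is closed in $M$ for each fixed $s\in V$ and the recurrent points are dense, the inclusion $V\cdot v\subseteq T\cdot v$ then propagates from recurrent points to all of $M$, completing the proof.

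The main obstacle is exactly the boundedness of the derivative cocycle $D_v\phi_{t_n}$ in the non-compact group $O(p,q)$. In the Riemannian case this is automatic, since the linear isotropy lies in a compact orthogonal group, and in the Lorentzian case it remains tractable; but for general signature an escaping derivative along a bounded, recurrent orbit is precisely the phenomenon behind D'Ambra's non-compact example. Ruling it out here must use the simple-connectivity and analyticity of $M$ in an essential way, through the theory of rigid analytic structures underlying Proposition~\ref{compact-quotient}. This step, rather than the structural reductions, is where the real work lies.
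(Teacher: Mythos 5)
The paper itself offers no proof of this lemma: it is quoted verbatim from D'Ambra \cite{DAm1988} (her Lemma~3.1), so your proposal can only be judged on its own merits. Your structural reductions are sound: $A$ is closed by maximality, $A\cong V\times T$, the problem reduces to $V\cdot v\subseteq T\cdot v$, the metric volume density gives Poincar\'e recurrence, and the $1$-jet rigidity correctly converts ``orbit recurrence plus bounded derivatives'' into subconvergence of the $\phi_{t_n}$ inside the closed group $A$. But the step you yourself flag as ``where the real work lies'' --- that along a recurrent orbit the derivative cocycle $D_v\phi_{t_n}$ stays in a compact subset of $O(p,q)$ --- is never proved; what you have written is not a proof but a reduction of the lemma to an unproved claim.

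Worse, that claim is false under exactly the hypotheses of the lemma, so the gap cannot be filled by any refinement of this strategy. If your argument went through, it would show $\xi_V=0$ for every one-parameter subgroup of every maximal connected abelian subgroup $A$, i.e.\ that $A=T$ is compact; combined with Proposition~\ref{compact-quotient}(1),(2) this would force $\Is(M)$ to be compact for \emph{every} compact, simply connected, real analytic pseudo-Riemannian manifold. D'Ambra's type-$(7,2)$ example, cited in the introduction of this very paper, is exactly such a manifold with noncompact isometry group; its isometry group therefore contains a maximal connected abelian subgroup with $V\neq\{0\}$, and at every $\phi^{\xi}$-recurrent point (a full-measure set) the derivatives $D_v\phi_{t_n}$ \emph{must} escape to infinity, since otherwise your own rigidity argument would make $t_n\xi_V$ bounded, a contradiction. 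You appear to have conflated this example with the non-simply-connected one $SL(2,\R)/\Gamma$: it is not the failure of simple-connectivity or analyticity that permits escaping derivatives; they occur under the lemma's full hypotheses. The entire content of the lemma is that the $A$-orbits and $T$-orbits coincide even when $A\neq T$, the orbit maps of the $\R^k$-factor then being non-proper; any approach whose success would prove $A=T$ proves too much and is therefore doomed. A symptom of this structural problem is your final ``propagation'' paragraph: the conclusion $\xi_V=0$ is a statement about the one-parameter subgroup, not about the recurrent point, so if your recurrence step were valid you would already have $V=\{0\}$ outright and there would be nothing left to propagate.
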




But in general, $\Is(M)$ is not necessarily compact.

\begin{ex}[{\cite[P.~556]{DAm1988}}]
Let $M=SL(2,\R)/\Gamma$ where the group $SL(2,\R)$ is
endowed with the $(1, 2)$ metric corresponding to the Killing form on the Lie
algebra $\fsl(2, \R)$ and $\Gamma$ is a cocompact lattice in $SL(2,\R)$. The isometry group
$\Is(M)$ equals $SL(2,\R)$.
\end{ex}

\section{The proofs of Theorems~\ref{Compact} and~\ref{nsc}}

Let $\Is_o(G)$ be the identity component of the isometry group $\Is(G)$ of $G$. Since the metric is left-invariant, $L(G)$, the group of left-multiplications of $G$, is a subgroup of $\Is_o(G)$. Then $$\Is_o(G)=L(G)H_0,$$ where $H_0$ is the identity component of the isotropic subgroup $H$ at the identity $e\in G$.
Note that $L(G)\cap H=\{\Id_G\}$. 
By Proposition~\ref{compact-quotient}, there exists a closed connected normal abelian subgroup $A$ of $\Is_o(G)$ such that
$\Is_0(G)/A$ is compact. Therefore, by \cite[Theorem~3.7 in Chapter~XV]{Hoc1965},
$$\Is_o(G)=KA,$$ where $K$ is a maximal compact subgroup containing $L(G)$. One may choose $A$ to be simply-connected, since any connected abelian Lie group is the direct product of the maximal torus and a simply-connected abelian subgroup, which can be chosen to be a normal subgroup of $\Is_o(G)$. Our main purpose is to prove that $A$ is trivial. It is necessary to investigate the structure of $K$ and $A$ respectively.

\subsection{The structure of $K$}
The following easy lemma is crucial and useful.
\begin{lem}\label{lem-rm}
Let $\rho\in \Is(G)$. If $\rho L_g=L_g\rho$ for any $g\in G$, then $\rho\in R(G)$, where $R(G)$ is the group of right-translations of $G$.
\end{lem}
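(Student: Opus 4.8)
The plan is to exploit the commutation relation by evaluating at a single point, namely the identity $e\in G$. Set $a=\rho(e)$. Since $\rho L_g=L_g\rho$ holds as maps for every $g\in G$, applying both sides to $e$ gives
\[
\rho(g)=\rho(L_g(e))=L_g(\rho(e))=g\,a
\]
for every $g\in G$. Thus the commutation hypothesis pins down $\rho$ completely: it must be the map $g\mapsto ga$.

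It then remains only to recognize this map as a right translation in the paper's convention. Recall that $R_y$ is defined by $R_y(x)=xy^{-1}$. Taking $y=a^{-1}$, one has $R_{a^{-1}}(g)=g\,(a^{-1})^{-1}=ga=\rho(g)$ for all $g\in G$, so $\rho=R_{a^{-1}}\in R(G)$, as desired.

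There is essentially no obstacle here: the only point requiring a moment's care is the bookkeeping with the sign convention $R_y(x)=xy^{-1}$, which forces the choice $y=a^{-1}$ rather than $y=a$. I would not invoke any deeper structure of $\Is(G)$; the argument uses nothing beyond the definitions of $L_g$ and $R_y$ and the evaluation of the commutation relation at $e$.
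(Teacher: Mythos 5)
Your proof is correct and follows essentially the same route as the paper: evaluate the commutation relation at the identity to get $\rho(g)=g\rho(e)$, then identify this map as a right translation. In fact you are slightly more careful than the paper, which writes $\rho=R_{\rho(e)}$ despite its convention $R_y(x)=xy^{-1}$ (a harmless slip that your choice $\rho=R_{\rho(e)^{-1}}$ corrects).
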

\begin{proof}
For any $x\in G$, $\rho L_g(x)=\rho(gx)=g\rho(x)$, for any $g\in G$. Taking $x=e$, we have that $\rho(g)=g\rho(e)=R_{\rho(e)}(g)$.
\end{proof}

\begin{prop}
  $K\subset L(G)R(G)$.
\end{prop}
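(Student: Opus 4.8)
The plan is to show that $K$, which is a maximal compact subgroup of $\Is_o(G)$ containing $L(G)$, sits inside $L(G)R(G)$. The natural idea is to equip the underlying manifold $G$ with an auxiliary left-invariant *Riemannian* metric that is also invariant under $K$, and then invoke the Ochiai--Takahashi theorem.

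Let me sketch the construction. Since $K$ is compact, I can average any Riemannian inner product on $T_eG$ over $K$ (with respect to Haar measure on $K$) to produce a $K$-invariant Riemannian metric $\langle\cdot,\cdot\rangle$ on $G$; because $L(G)\subset K$, this metric is automatically left-invariant. So $K\subset\Is(G,\langle\cdot,\cdot\rangle)$. Now $K$ is connected (the identity component of $\Is_o(G)$ contains it, and in fact $K$ can be taken connected as the maximal compact in the connected group $\Is_o(G)$), hence $K\subset\Is_o(G,\langle\cdot,\cdot\rangle)$, the identity component of the Riemannian isometry group. By the Ochiai--Takahashi theorem (\cite[Theorem~1]{OT1976}), applied to the compact simple Lie group $G$ with this left-invariant Riemannian metric, we get $\Is_o(G,\langle\cdot,\cdot\rangle)\subset L(G)R(G)$. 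Therefore $K\subset L(G)R(G)$, which is the claim.

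I would carry out the steps in this order: first, record that $K$ may be taken to be a connected compact subgroup of $\Is_o(G)$ containing $L(G)$ (this is already set up in the paragraph preceding the proposition, via Hochschild's theorem); second, average to build the $K$-invariant left-invariant Riemannian metric and note $K$ lands in the identity component of its Riemannian isometry group; third, apply the Ochiai--Takahashi structure theorem for left-invariant Riemannian metrics on compact simple Lie groups to conclude. The averaging step is routine once compactness of $K$ is in hand, and the Riemannian isometry group of a left-invariant metric always contains $L(G)$, so connectedness passes through cleanly.

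The main obstacle I anticipate is not the averaging itself but the logical point that the *pseudo*-Riemannian isometries in $K$ really do become *Riemannian* isometries of the averaged metric. This is immediate: $K$ acts on $G$ by diffeomorphisms, and the averaged metric is $K$-invariant by construction, so every element of $K$ is an isometry of it regardless of the original pseudo-Riemannian structure. Thus the pseudo-Riemannian data is only used to produce the ambient group $\Is_o(G)$ and its decomposition $KA$; once $K$ is isolated as a compact group containing $L(G)$, the argument is purely Riemannian and the structure theorem applies verbatim. One should double-check that $K$ acts effectively and smoothly (it does, being a subgroup of a Lie transformation group), so that the averaged tensor is genuinely a Riemannian metric with $K$ in its isometry group.
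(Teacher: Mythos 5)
Your proof is correct, but it takes a genuinely different route from the paper's. The paper works group-theoretically inside $K$: it writes $K=L(G)H_c$ with $H_c=K\cap H$ the isotropy part of $K$, applies the normality criterion of Ochiai--Takahashi (Theorem~\ref{Normal}) to conclude $L(G)\lhd K$, takes a compact normal complement $G_1$ with $K=L(G)G_1$ and $G_1\cap L(G)$ finite, notes that connectedness forces $G_1$ to centralize $L(G)$, and then invokes Lemma~\ref{lem-rm} to get $G_1\subset R(G)$. You instead reduce the pseudo-Riemannian statement to the Riemannian one: average a metric over the compact group $K$ to produce a left-invariant Riemannian metric having $K$ in its isometry group, observe that $K$ is connected (being a maximal compact subgroup of the connected group $\Is_o(G)$) so that $K$ lands in the identity component of the Riemannian isometry group, and quote \cite[Theorem~1]{OT1976} as a black box. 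This is a clean and valid reduction; in effect you use the theorem whose proof rests on Theorem~\ref{Normal}, where the paper reruns that mechanism directly. What the paper's argument buys in addition is the explicit compact normal subgroup $G_1\subset R(G)$ with $K=L(G)G_1$ and finite intersection, which is used immediately afterwards to define $G_2$, $Z$, and the map $\Delta$; from your containment one can still recover such a subgroup by setting $G_1=K\cap R(G)$ (this is normal in $K$ since $R(G)$ is normalized by every element of $L(G)R(G)$), so nothing downstream is lost, but that extraction would need to be said. One wording issue: you cannot literally ``average a Riemannian inner product on $T_eG$ over $K$'', since $K$ does not fix $e$ and so does not act on $T_eG$; what must be averaged is a metric tensor on all of $G$, i.e. $g=\int_K k^*g_0\,dk$ for an arbitrary starting Riemannian metric $g_0$, with left-invariance of $g$ following from $L(G)\subset K$ and the bi-invariance of Haar measure. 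Your later remarks make clear this is what you intend, so it is a slip of phrasing rather than a gap.
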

\begin{proof}
Let $H_c=K\cap H$. Then $K=L(G)H_c$. By Theorem~\ref{Normal}, $L(G)$ is a normal subgroup of $K$. Then there exists a compact normal subgroup $G_1$ such that $K=L(G)G_1$ and the intersection $G_1\cap L(G)$ is finite. Thanks to the above Lemma, we have $G_1\subset R(G)$.
\end{proof}

Since $G_1$ is the product of a semisimple normal subgroup and its center, there exist a semisemiple subgroup $G_2$ and a torus $Z$ of $G$ such that $G_1=R(G_2)R(Z)$. Denote by $\fk,\g_0,\g_1,\fh,\fh_c,\fa$ the Lie algebras of $K,G,G_1,H,H_c,A$, respectively.
We have that $\fk=\g_0\oplus\g_1$ as a direct sum of compact ideals. For any $x\in \fh_c$, there exist unique $x_0\in\g_0$, $x_1\in\g_1$ such that $x=x_0+x_1$. Thus we have two maps $\Delta_i:\fh_c\rightarrow \g_i$, $i=0,1$, defined by $$\Delta_i(x)=x_i.$$
It is easy to see that $\Delta_i$ are injective Lie algebra homomorphisms and $\Delta_1$ is an isomorphism. Set $\Delta=\Delta_0\circ\Delta_1^{-1}$.

\subsection{The bilinear form on Lie algebra of $\Is_o(G)$} Since $\Is_o(G)=KA=L(G)H_0$, identifying the Lie algebra of $L(G)$ with the Lie algebra $\g_0$ of $G$, we have that the Lie algebra of $\Is_o(G)$ is
$$\fk\dot+\fa=\g_0\dot+\fh,$$
as a direct sum of vector spaces. The left-invariant metric on $G$ defines on $\g_0$ an $\fh$-invariant bilinear form $\langle\cdot,\cdot \rangle$, which may be extended to $\g_0\dot+\fh$ by defining
$$\langle \g_0\dot+\fh,\fh\rangle=0.$$

\subsection{The structure of $A$}

By Lemma~\ref{lem-rm} again, one can easily see that the only element in $A$ which commutes with $L(G)$ is the identity element. Since $A$ is a normal subgroup of $\Is_o(G)$, so is $C_K(A)$, which is the centralizers of $A$ in $K$.
Therefore, $C_{K}(A)=R(G_3)R(T_1)$, where $G_3$ is a normal subgroup of $G_2$ and $T_1\subset Z$. Let $T_3$ be a maximal torus of $G_3$. Then $R(T_3)R(T_1)A$ is a maximal connected abelian subgroup of $\Is_o(G)$. By Lemma~\ref{torus-orbit}, the orbit of this group through the identity element is the same as that of $R(T_3)R(T_1)$. Thus $A\cdot e\subset T_3T_1$, which is independent of the choice of $T_3$. Therefore, one can easily see that $A\cdot e\subset T_1$, which defines a map $\varphi:A\rightarrow T_1$ by $$\varphi(a)=a'=a\cdot e.$$

\begin{lem}
The map $\varphi$ is a group homomorphism.
\end{lem}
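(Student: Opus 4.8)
The plan is to verify directly that $\varphi(ab)=\varphi(a)\varphi(b)$, where the product on the right-hand side is the group multiplication in the torus $T_1\subset G$. Throughout I write $a'=\varphi(a)=a\cdot e$ and $b'=\varphi(b)=b\cdot e$, both of which lie in $T_1$ by the preceding discussion.

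First I would unwind the definition of $\varphi$ using the fact that $\Is_o(G)$ acts on $G$ on the left, so that composition in $A$ becomes composition of the action: $\varphi(ab)=(ab)\cdot e=a\cdot(b\cdot e)=a\cdot b'$. Thus the statement reduces to the identity $a\cdot b'=a'b'$, i.e. applying the isometry $a$ to the \emph{point} $b'\in T_1$ yields the group product $a'b'$.

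The crucial input is that right translation by $b'$ is an isometry commuting with $A$. Indeed, since $b'\in T_1\subset Z$ and $T_1$ is a subgroup, the map $\rho\colon x\mapsto xb'$ is $R_{(b')^{-1}}\in R(T_1)$, and $R(T_1)\subset C_K(A)$ by the identification $C_K(A)=R(G_3)R(T_1)$ obtained above; in particular $\rho$ commutes with $a$ as maps $G\to G$. Since $\rho(e)=b'$, I would then compute
$$a\cdot b'=a\bigl(\rho(e)\bigr)=(a\circ\rho)(e)=(\rho\circ a)(e)=\rho(a\cdot e)=\rho(a')=a'b',$$
which is precisely $\varphi(a)\varphi(b)$.

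The only genuinely delicate point is the commutation $a\circ\rho=\rho\circ a$; everything else is formal. This rests entirely on the explicit structure $C_K(A)=R(G_3)R(T_1)$ established before the lemma, so that $R(T_1)$ centralizes $A$, together with $T_1\subset Z$ ensuring that $R_{(b')^{-1}}$ is indeed one of the isometries in $G_1\subset K$. Once this is in hand the homomorphism property follows immediately, with no further use of the abelianness of $A$ beyond what is already built into the construction of $T_1$.
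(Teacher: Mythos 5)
Your proof is correct and follows essentially the same route as the paper's: both reduce $\varphi(ab)$ to $a(\varphi(b))$, write right translation by $\varphi(b)$ as the element $R_{\varphi(b)^{-1}}\in R(T_1)$, and invoke $R(T_1)\subset C_K(A)$ (from the decomposition $C_K(A)=R(G_3)R(T_1)$) to commute it past $a$, yielding $\varphi(a)\varphi(b)$. The paper's proof is just a one-line compression of exactly this computation.
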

\begin{proof}
For any $a, b\in A$, $$\varphi(ab)=(ab)\cdot e=a(\varphi(b))=a(R_{\varphi(b)^{-1}}e)=R_{\varphi(b)^{-1}}a(e)=\varphi(a)\varphi(b),$$ since $A$ commutes with $R_{T_1}$.
\end{proof}

Identify the Lie algebra of $R_{T_1}$ with $\ft_1$, the Lie algebra of $T_1$. For any $b\in \mathfrak a$, there exists $b_1\in\mathfrak t_1$ such that $b+b_1\in \mathfrak h$. Note that the elements in $L(G)$ and $R(G)$ commute, we have $[x,b_1]=0$, for any $x$ in $\g_0$, the Lie algebra of $L(G)$. Therefore, for any $u,v\in \g_0$, $$\langle [b+b_1,u],v\rangle+\langle u,[b+b_1,v]\rangle=0,$$
which implies that $$\langle [b,u],v\rangle+\langle u,[b,v]\rangle=0.$$

Let $\ft_0=\Delta(\ft_1)\subset\g_0$. Since $\ft_0$ is a compactly embedded subalgebra of $\g_0\dot+\fh$, for any $t_0\in\ft_0$, the eigenvalues of $\ad t_0$ are $0$ or purely imaginary. As a $\ft_0$-module, $$\mathfrak a=\mathfrak a_0+\mathfrak a_1,$$ where $\fa_0$ is the trivial submodule and $\fa_1$ is the direct sum of irreducible 2-dimensional submodules.

\begin{lem}
$\fa_1\subseteq\fh$.
\end{lem}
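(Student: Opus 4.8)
The goal is to show $\fa_1 \subseteq \fh$.The plan is to reformulate membership in $\fh$ as the vanishing of a projection and then push everything through the $\ft_0$-module structure. Since $\g_0\dot+\fh$ is a direct sum of vector spaces, let $p\colon\g_0\dot+\fh\to\g_0$ be the projection along $\fh$; by definition $w\in\fh$ if and only if $p(w)=0$. Because $\fa=\fa_0+\fa_1$ as $\ft_0$-modules with $\fa_0$ trivial and $\fa_1$ a sum of nontrivial two-dimensional modules, on each summand of $\fa_1$ some $\ad t_0$ (with $t_0\in\ft_0$) acts invertibly, so $\fa_1=[\ft_0,\fa]$. Thus it suffices to show $p\big([\ft_0,\fa]\big)=0$.

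First I would locate $p(\fa)$. Given $b\in\fa$, choose $b_1\in\ft_1$ with $b+b_1\in\fh$. Viewing $b_1\in\g_1$ and applying $\Delta_1^{-1}$, the element $\tau=\Delta_1^{-1}(b_1)\in\fh_c\subseteq\fh$ decomposes as $\tau=\Delta_0(\tau)+\Delta_1(\tau)=\Delta(b_1)+b_1$, so $b_1=\Delta_1^{-1}(b_1)-\Delta(b_1)$ exhibits $p(b_1)=-\Delta(b_1)$. Hence $p(b)=p\big((b+b_1)-b_1\big)=\Delta(b_1)\in\ft_0$. Therefore $p(\fa)\subseteq\ft_0$, and in particular $p(b)$ lies in the abelian algebra $\ft_0$ for every $b\in\fa$.

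Now fix $t_0=\Delta(t_1)\in\ft_0$ and set $s=t_0+t_1=\Delta_1^{-1}(t_1)\in\fh_c$. Since $R(T_1)\subseteq C_K(A)$ we have $[\ft_1,\fa]=0$, and since $\g_0,\g_1$ are commuting ideals of $\fk$ we have $[\ft_1,\g_0]=0$; consequently $[s,b]=[t_0,b]$ for $b\in\fa$ and $[s,v]=[t_0,v]\in\g_0$ for $v\in\g_0$. As $s\in\fh$ (so $\ad s$ preserves $\fh$) and $[s,\g_0]=[t_0,\g_0]\subseteq\g_0$ (so $\ad s$ preserves $\g_0$), the projection $p$ is $\ad s$-equivariant, i.e. $p[s,w]=[t_0,p(w)]$ for all $w$. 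Applying this to $b\in\fa$ gives
$$p[t_0,b]=p[s,b]=[t_0,p(b)]=0,$$
because $p(b)\in\ft_0$ and $\ft_0$ is abelian. Hence $[t_0,b]\in\fh$ for all $t_0\in\ft_0$ and $b\in\fa$, and therefore $\fa_1=[\ft_0,\fa]\subseteq\fh$.

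The main obstacle is the bookkeeping between the two splittings $\g_0\dot+\fh=\fk\dot+\fa$: one must verify that the $\g_0$-part of $\fa$ lands precisely in $\ft_0$ (this is where the relation $b+b_1\in\fh$ with $b_1\in\ft_1$ and the isomorphism $\Delta_1$ enter) and that the single element $s=\Delta(t_1)+t_1$ simultaneously lies in $\fh$ and acts on both $\fa$ and $\g_0$ exactly as $t_0$ does, which is what makes $p$ equivariant. If one prefers to argue through the metric, the same conclusion follows by replacing ``$p(w)=0$'' with ``$\langle w,v\rangle=0$ for all $v\in\g_0$'' and using the $\fh$-invariance (skew-symmetry) of $\langle\cdot,\cdot\rangle$ in place of the equivariance of $p$.
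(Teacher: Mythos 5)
Your proof is correct and takes essentially the same route as the paper: both arguments hinge on writing $b+b_1\in\fh$ with $b_1\in\ft_1$, transferring $t_0\in\ft_0$ into $\fh$ via $s=\Delta_1^{-1}(t_1)=t_0+t_1\in\fh_c$, and using that $\ad\ft_0$ acts invertibly on the nontrivial two-dimensional summands so that $\fa_1=[\ft_0,\fa]\subseteq\fh$. Your write-up merely makes explicit (via the projection $p$ and its $\ad s$-equivariance) the steps the paper compresses into ``Since $[t,a_0]=0$, we have $[t,a]\in\fh$'', which is a welcome clarification but not a different method.
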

\begin{proof}
For any $a\in \mathfrak a_1$, there exist $a_0,t\in \ft_0$ such that $a+a_0\in \mathfrak h$ and $[t,a]\neq 0$. Since $[t,a_0]=0$, we have $[t,a]\in\mathfrak h$. It implies that $a\in \mathfrak h$, i.e. $\mathfrak a_1\subseteq \mathfrak h$.
\end{proof}

\begin{lem}
$\fa_0=\{0\}$.
\end{lem}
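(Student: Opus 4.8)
The plan is to show that every $b\in\fa_0$ is zero by proving that the one‑parameter subgroup $\exp(tb)\subseteq A$ is relatively compact in $\Is(G)$; since $A$ is a closed, simply‑connected abelian (hence vector) group, it has no nontrivial relatively compact one‑parameter subgroup, and this forces $b=0$. To set up, I would use $\Is_o(G)=L(G)H_0$ and $L(G)\cap H=\{\Id_G\}$, which give $\fk\dot+\fa=\g_0\dot+\fh$ as vector spaces; let $\mathrm{pr}\colon\g_0\dot+\fh\to\g_0$ be the projection along $\fh$ and write each $b\in\fa$ as $b=u_b+c_b$ with $u_b=\mathrm{pr}(b)\in\g_0$ and $c_b\in\fh$. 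Because $\Delta(t_1)+t_1\in\fh$ gives $\ft_0\cdot e=\ft_1$, and $A\cdot e\subseteq T_1$ forces $b\cdot e=u_b\cdot e\in\ft_1$, injectivity of $x\mapsto x\cdot e$ on $\g_0$ yields $u_b\in\ft_0$. I would also record that the conjugations $I_s=\exp(\Delta(t_1)+t_1)$, $s\in T_1$, lie in $H$, so $\langle\cdot,\cdot\rangle$ is $\Ad(T_1)$‑invariant.

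Next I would introduce $D_b:=\mathrm{pr}\circ(\ad b)|_{\g_0}\in\End(\g_0)$, so that $\langle D_bx,y\rangle=\langle[b,x],y\rangle$ for $x,y\in\g_0$ (the form annihilates $\fh$). The identity $\langle[b,x],y\rangle+\langle x,[b,y]\rangle=0$ established above says $D_b$ is skew for $\langle\cdot,\cdot\rangle$. Since $\fa$ is an ideal, $[b,\g_0]\subseteq\fa$, and as $\mathrm{pr}(\fa)\subseteq\ft_0$ we get $\IM D_b\subseteq\ft_0$. For $b\in\fa_0$ and $t_0\in\ft_0$ we have $[b,t_0]=0$, so $\ft_0\subseteq\Ker D_b$, and skewness then gives $\IM D_b\perp\ft_0$. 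Thus $\IM D_b$ lies in the radical of $\langle\cdot,\cdot\rangle|_{\ft_0}$, and $\IM D_b\subseteq\ft_0\subseteq\Ker D_b$, so $D_b^2=0$.

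Granting the key vanishing $D_b=0$, the argument closes quickly. Since $u_b\in\ft_0$ commutes with $b\in\fa_0$, the elements $u_b$ and $c_b=b-u_b$ commute, so $\tau_t:=\exp(tc_b)=\exp(tb)\,L_{\exp(-tu_b)}$ is a one‑parameter subgroup of $H$, with linear isotropy $D\tau_t|_e=\exp\!\big(t(D_b-(\ad u_b)|_{\g_0})\big)=\exp\!\big(-t(\ad u_b)|_{\g_0}\big)$. As $\g_0$ is compact simple, $(\ad u_b)|_{\g_0}$ is elliptic, so $\{D\tau_t|_e\}$ is bounded; because an isometry is determined by its $1$‑jet at a point (Proposition~\ref{compact-quotient}(3)), $\{\tau_t\}$ is relatively compact in $\Is(G)$. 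Then $\exp(tb)=L_{\exp(tu_b)}\tau_t$ lies in the product of the compact group $L(T_1)$ and a relatively compact set, hence is relatively compact, and therefore $b=0$; as $b\in\fa_0$ was arbitrary, $\fa_0=\{0\}$.

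The main obstacle is exactly the vanishing $D_b=0$, i.e.\ nondegeneracy of $\langle\cdot,\cdot\rangle$ on $\ft_0$. This cannot follow from $\Ad(T_1)$‑invariance alone, since that only makes the form nondegenerate on the full fixed subspace $\g^{T_1}\supseteq\ft_0$, not on $\ft_0$ itself; and in the pseudo‑Riemannian setting a nonzero skew square‑zero $D_b$ (a null, "unipotent'' isotropy direction along $\ft_0$) is a priori admissible and is precisely the mechanism that would make $\Is(G)$ noncompact. To rule it out I would bring in the global real‑analytic input — Gromov's structure theorem (Proposition~\ref{compact-quotient}) together with D'Ambra's orbit lemma (Lemma~\ref{torus-orbit}) applied to the maximal connected abelian subgroup $R(T_3)R(T_1)A$ — and the simplicity of $G$, to exclude a nonzero nilpotent isotropy part. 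This is the step where the special geometry of the compact simple group manifold is essential, and I expect it to carry the real weight of the proof.
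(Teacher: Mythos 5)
Your setup (the projection $\mathrm{pr}$, the operator $D_b=\mathrm{pr}\circ(\ad b)|_{\g_0}$, skewness, $\IM D_b\subseteq\ft_0\subseteq\Ker D_b$) is correct, and your endgame, granting $D_b=0$, runs roughly parallel to the paper's: the paper likewise takes $a\in\fa_0$ and $a_1\in\ft_1$ with $a+a_1\in\fh$, notes that $[a+a_1,\g_0]\subseteq\fh$ makes the linear isotropy of $\exp s(a+a_1)$ trivial, invokes Proposition~\ref{compact-quotient}(3) to conclude $\exp s(a+a_1)=\mathrm{id}$, and gets a contradiction because $\exp(sa)$ then lies in the compact group $R(T_1)$ while $\fa$ meets $\fk$ trivially. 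But your proposal contains a genuine gap, which you yourself flag: the key vanishing $D_b=0$, equivalently $[\fa_0,\g_0]\subseteq\fh$, is assumed rather than proved, and this statement is precisely the heart of the paper's proof of the lemma. Your suggested route for closing it (re-applying Gromov's structure theorem and D'Ambra's orbit lemma to $R(T_3)R(T_1)A$) is not what the paper does and offers no visible traction: Lemma~\ref{torus-orbit} has already been spent in establishing $A\cdot e\subseteq T_1$, which is exactly what gave you $\IM D_b\subseteq\ft_0$, and it yields nothing further. (A secondary soft spot: injectivity of the isotropy representation, which is what Proposition~\ref{compact-quotient}(3) gives, does not by itself turn boundedness of $\{D\tau_t|_e\}$ into relative compactness of $\{\tau_t\}$; you would also need the isotropy representation to be a closed embedding.)

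What the paper actually does at the crucial step is algebraic, not global-analytic, and it leans on the \emph{previous} lemma $\fa_1\subseteq\fh$. It sets $\fb_1=d\varphi(\fa_0)\subseteq\ft_1$, $\fb_0=\Delta(\fb_1)\subseteq\ft_0$, $\ff=[\fb_0,\g_0]$, and uses $\fb_0$-invariance of $\langle\cdot,\cdot\rangle$ to split $\g_0=C_{\g_0}(\fb_0)\dot+\ff$ orthogonally, with the form nondegenerate on each factor. Since $[\fa_0,\ff]\subseteq\fa_1\subseteq\fh$, only $[\fa_0,C_{\g_0}(\fb_0)]\subseteq\fh$ remains. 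For $a\in\fa_0$ and $x\in C_{\g_0}(\fb_0)$, the paper writes $x=\sum_i[x_i,y_i]$ with $x_i,y_i\in\ff$ (this bracket-generation is where simplicity of $\g_0$ enters) and then, via the Jacobi identity and invariance of the form under elements of $\fh$ --- in particular under $[a,x_i],[a,y_i]\in\fa_1\subseteq\fh$ --- shows $\langle[a,x],y\rangle=0$ for every $y\in C_{\g_0}(\fb_0)$; nondegeneracy on $C_{\g_0}(\fb_0)$ then forces $[a,x]\in\fh$. So the nilpotent isotropy direction you correctly identified as the obstruction is excluded by this bracket-and-invariance computation, not by further appeal to the analytic structure theory; without that computation (or a substitute for it), your argument does not prove the lemma.
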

\begin{proof}
Let $\fb_1=d\varphi(\fa_0)\subseteq\g_1$ and $\fb_0=\Delta(\fb_1)\subseteq \g_0$. Set $\ff=[\fb_0,\g_0]$.
Then we have
$$\g_0=C_{\g_0}(\fb_0)\dot+\ff.$$
It is easy to see that $[\fa_0,\ff]\subseteq \fa_1$, $[\fa_0,C_{\g_0}(\fb_0)]\subseteq \fa_0$. Since $\langle\cdot ,\cdot \rangle$ is $\fh$-invariant, it is not hard to see that it is $\fb_0$-invariant. Therefore, $\langle\cdot,\cdot \rangle$ is nondegenerate when restricted to $C_{\g_0}(\fb_0)$ or $\ff$.

For any $a\in \mathfrak a_0$, if there exists $x\in C_{\g_0}(\fb_0)$ such that $[a,x]\notin\fh$, then there exists $y\in C_{\g_0}(\fb_0)$ such that $$\langle [a,x],y\rangle\neq 0.$$
There exist $x_i,y_i\in\ff$, $i=1,2,\cdots,m$, such that $x=\sum_{i=1}^m[x_i,y_i].$ It follows that
\begin{eqnarray*}
\langle [a,x],y\rangle &=& \sum \langle [a,[x_i,y_i]],y\rangle \\
&=& \sum(\langle[[a,x_i],y_i],y\rangle+\langle[x_i,[a,y_i]],y\rangle) \\
&=& \sum(-\langle[[a,x_i],y],y_i\rangle+\langle x_i,[[a,y_i],y]\rangle).
\end{eqnarray*}
Since $a\in\fa_0$, $x_i\in \ff$, $y\in C_{\g_0}(\fb_0)$, we have $[a,x_i], [[a,x_i],y]\in\fa_1\subseteq\fh$. Therefore, $\langle[[a,x_i],y],y_i\rangle=0$. Similarly, $\langle x_i,[[a,y_i],y]\rangle=0$.
It follows that $$\langle [a,x],y\rangle=0,$$ which is a contradiction. Hence
$$[a,C_{\g_0}(\fb_0)]\subseteq\fh,\ \ \ \forall a\in \mathfrak a_0.$$
Therefore,
$$[\fa_0,\g_0]\subseteq\fh.$$
By the facts that for any $a\in \mathfrak a_0$ there exists $a_1\in\ft_1$ such that $a+a_1\in\mathfrak h$ and $[a_1,\g_0]=0$, we have
$$[a+a_1,\g_0]\subseteq\fh.$$
Together with $\exp s(t_a+a)e=e$, we have
$$\exp s(t_a+a)=id.$$
Thus $\exp st_a=-\exp sa$. Then $\{{\mathbb R} t_a\}=\{{\mathbb R} a\}$. Namely $\mathfrak a_0\subset \fb_0$, which is impossible. Thus $\mathfrak a=\mathfrak a_1\subset \mathfrak h$.
\end{proof}

\subsection{The proofs of the main results}

Now we are in a position to prove Theorems~\ref{Compact} and ~\ref{nsc}.

\begin{proof}[\bf The proof of Theorem~\ref{Compact}] Thanks to the above Lemma, $\fa_0=\{0\}$ and $\fa=\fa_1\subseteq\fh$. Since $\mathfrak h$, as the Lie algebra of isotropy subgroup, has no nontrivial ideal of $\g_0\dot+\fh$, we get that $\fa=\{0\}$. Hence $A$ is trivial and $\Is(G)$ is compact. \end{proof}

Now let $G$ be a connected simple compact Lie group, which is not necessarily simply-connected. Let $\widetilde{G}$ be the universal cover of $G$ with the covering map $\pi:\widetilde{G}\rightarrow G$.

\begin{proof}[\bf The proof of Theorem~\ref{nsc}] Set $n=|\Ker\pi|$. Each $\varphi\in\Is_o(G)$ can be lifted to $n$ isometries $\varphi_1,\ldots,\varphi_n$ in $\Is_o(\widetilde{G})$ and $L_G$ is lifted to $L_{\widetilde{G}}$. Let $\pi:\widetilde{G}\rightarrow G$ be the natural projection with $Z=\Ker \pi$. Let $G_1$ be the closed subgroup of $\Is_o(\widetilde{G})$ which preserves the coset of $Z$. Then $G_1$ consists of all lifts of the elements of $\Is_o(G)$. Therefore,
$\Is_o(G)$ is compact since $\Is_o(\widetilde{G})$, hence $G_1$, is compact. Since $L_{\widetilde{G}}\lhd G_1$, we have
$L(G)\lhd \Is_o(G)$. Hence $\Is_o(G)\subset L(G)R(G)$. Write $\Is_o(G)=L(G)H$ and $\Is_o(\widetilde{G})=L({\widetilde{G}})\widetilde{H}$.
It is easy to see that $$H=\{\varphi\in\Int(\g)|\langle\varphi(X),\varphi(Y)\rangle=\langle X,Y\rangle,\forall X,Y\in\g\}=\widetilde{H}.$$
Then Theorem~\ref{nsc} holds. \end{proof}

\section{Acknowledgements} This work was partially supported by National Natural Science Foundation of China (11571182 and 11931009) and Natural Science Foundation of Tianjin (19JCYBJC30600).

\end{document}